\newcommand{\N}{\mathbb{N}}
\newcommand{\R}{\mathbb{R}}
\newcommand{\Z}{\mathbb{Z}}
\newcommand{\T}{\mathbb{T}}
\newcommand{\eps}{\varepsilon}
\newcommand{\dd}{\, \mbox{d}}
\newtheorem{lm}{Lemma}
\newtheorem{thm}{Theorem}
\theoremstyle{definition}
\newtheorem{definition}{Definition}
\newtheorem{remark}{Remark}
\title{Generalized Bounded Distortion Property}
\author{Gregory Borissov}
\address{Gregory Borissov, Department of Mathematics, University of California, Irvine, CA~92697, USA}
\email{gborisso@uci.edu}
\author{Grigorii Monakov}
\address{Grigorii Monakov, Department of Mathematics, University of California, Irvine, CA~92697, USA}
\email{gmonakov@uci.edu}
\begin{document}
\maketitle

\begin{abstract}
    We prove the Nonstationary Bounded Distortion Property for $C^{1 + \varepsilon}$ smooth dynamical systems on multidimensional spaces. The results we obtain are motivated by potential application to study of spectral properties of discrete Schr\"odinger operators with potentials generated by Sturmian sequences.
\end{abstract}

\section{Introduction}

The Bounded Distortion Lemma is a well-known and widely used tool in the theory of smooth dynamical systems. One of its classical forms is the following:

\begin{thm}
    Consider a twice continuously differentiable function $f: \R \to \R$ with a non-vanishing derivative, such that
    \begin{equation*}
        \frac{|f''(x)|}{|f'(x)|} < C
    \end{equation*}
    for some constant $C$ and any $x \in \R$. Assume that there exists an interval $I \subset \R$, such that for some $n \in \mathbb{N}$ we have
    \begin{equation*}
        \sum_{j = 0}^{n - 1} |f^j(I)| < L
    \end{equation*}
    for some constant $L$. Then there exists a constant $K$ that depends only on $L$ and $C$ (and not $n$), such that for any two subintervals $I_1, I_2 \subset I$,
    \begin{equation*}
        K^{-1} \frac{|I_1|}{|I_2|} < \frac{|f^n(I_1)|}{|f^n(I_2)|} < K \frac{|I_1|}{|I_2|}.
    \end{equation*}
\end{thm}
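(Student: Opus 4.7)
The plan is to reduce the distortion statement about interval ratios to a statement about ratios of derivatives of $f^n$, and then to bound those derivative ratios using the hypothesis on $|f''|/|f'|$ together with the chain rule.

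First I would apply the Mean Value Theorem to each subinterval: for $k = 1, 2$ there exists $\xi_k \in I_k$ with
\begin{equation*}
    |f^n(I_k)| = |(f^n)'(\xi_k)| \cdot |I_k|.
\end{equation*}
Consequently,
\begin{equation*}
    \frac{|f^n(I_1)|}{|f^n(I_2)|} = \frac{|(f^n)'(\xi_1)|}{|(f^n)'(\xi_2)|} \cdot \frac{|I_1|}{|I_2|},
\end{equation*}
so it suffices to show $|(f^n)'(x)|/|(f^n)'(y)|$ is bounded above and below by constants depending only on $C$ and $L$, uniformly in $x, y \in I$.

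The central step is to estimate this ratio via its logarithm. By the chain rule,
\begin{equation*}
    \log \frac{|(f^n)'(x)|}{|(f^n)'(y)|} = \sum_{j=0}^{n-1} \Bigl( \log |f'(f^j(x))| - \log |f'(f^j(y))| \Bigr).
\end{equation*}
Since $f'$ does not vanish, $\log |f'|$ is differentiable with derivative $f''/f'$, whose absolute value is bounded by $C$. Applying the Mean Value Theorem to $\log |f'|$ on the interval spanned by $f^j(x)$ and $f^j(y)$, which is contained in $f^j(I)$, each summand is bounded in absolute value by $C \cdot |f^j(I)|$. Summing over $j$ and invoking the hypothesis $\sum_{j=0}^{n-1} |f^j(I)| < L$ gives
\begin{equation*}
    \left| \log \frac{|(f^n)'(x)|}{|(f^n)'(y)|} \right| < CL,
\end{equation*}
so the ratio lies in $[e^{-CL}, e^{CL}]$. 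Setting $K = e^{CL}$ then yields the claimed two-sided inequality.

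The only subtle point is the need to sign-track: because $f'$ is continuous and non-vanishing, $f$ is strictly monotone, so $f^n$ is too, and all of the quantities $|f^j(I)|$, $|(f^n)'(\xi)|$, and $|f^n(I_k)|$ are consistent with one another (images of intervals are intervals, and the MVT applies). I do not expect a real obstacle here, but care is needed to justify replacing $|f^j(x) - f^j(y)|$ by $|f^j(I)|$ in the Lipschitz bound for $\log|f'|$; this uses precisely the fact that $f^j(x), f^j(y) \in f^j(I)$, which is an interval.
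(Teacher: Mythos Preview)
Your proposal is correct and follows essentially the same strategy as the paper: expand $\log|(f^n)'|$ via the chain rule, bound each summand by $C\cdot|f^j(I)|$, sum, and exponentiate; then transfer the derivative-ratio bound to the interval-ratio bound. The only cosmetic differences are that the paper uses the Fundamental Theorem of Calculus where you invoke the Mean Value Theorem (both in bounding $\log|f'(f^j(x))|-\log|f'(f^j(y))|$ and in passing from $|f^n(I_k)|$ to $|(f^n)'|$), and that the paper compares each interval to a fixed reference point before dividing, which yields $K=e^{2CL}$, whereas your direct comparison of $\xi_1$ and $\xi_2$ gives the slightly sharper $K=e^{CL}$.
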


Different forms of this statement are used in \cite{De, L, PT, Y, F} to establish various fundamental results.

One of its most famous applications is in the proof of Denjoy's Theorem, which states that any diffeomorphism of the circle with irrational rotation number and derivative of bounded variation is topologically conjugate to an irrational rotation (for details see \cite{De} or \cite[Theorem 12.1.1]{KH}). 

Another application of bounded distortion can be seen in the proof of a classical folklore result that asserts existence of a continuous invariant ergodic measure for $C^2$ expanding maps of the circle. Moreover, if a piecewise differentiable map admits an induced Markov map and satisfies the Volume Bounded Distortion Property (also known as geometrical self similarity) and some additional assumptions, then it admits an ergodic invariant absolutely continuous probability measure. For details and other results of similar nature see \cite{L} and references therein.

Bounded distortion plays an important role in the study of dynamically defined Cantor sets. For example, it is used to prove that the Hausdorff dimension coincides with the box counting dimension (also known as limit capacity) if the generators are in $C^{1 + \eps}$ (see \cite{T}, or \cite[Chapter 4, Theorem 3]{PT} for a modern exposition). Bounded distortion also allows to estimate thickness of a dynamically defined Cantor set, and that estimate is used to prove the celebrated Newhouse Theorem (see \cite{N1} and \cite{PT} for details).

A special case of bounded distortion for nonstationary sequences of uniformly hyperbolic maps of a plane with the uniform cone condition was studied by J. Palis and J-C. Yoccoz in \cite[Corollary 3.4]{PY}. A generalization of said result that allows unbounded derivatives can be found in \cite{N2}. These results have important applications in the theory of SRB measures for surface diffeomorphisms.
%
%
For more results utilizing the bounded distortion technique see \cite{Y, F} and references therein. 

We would like to point out that bounded distortion is a phenomenon that is usually observed in systems with $C^{1 + \eps}$ regularity (see Theorem \ref{thm:main}). For example, one could see \cite{O} by H. Ounesli, which proves that within the space of ergodic Lebesgue-preserving $C^1$ expanding maps of the circle, unbounded distortion is $C^1$-generic.


The main motivation for our result is a potential application to the study of spectral properties of discrete Schr\"odinger operators with Sturmian potentials. For details see Appendix \ref{app:spectral}.

\section{Main result} \label{sec:multDim}

In this section we will state and prove the main result of the paper. We start with some definitions.

\begin{definition}
    If $\gamma$ is a regular $C^1$ curve, meaning $\gamma:[a,b] \subset \R \rightarrow \R^d$ and $\gamma'(t)$ is continuous and does not vanish, then define the \textbf{length} of $\gamma$ as
    \begin{equation*}
        L(\gamma) = \int_a ^b \|\gamma'(t)\| \dd t,
    \end{equation*}
    and define the \textbf{maximal angle} of $\gamma$ as
    \begin{equation*}
        \alpha(\gamma) = \sup_{x,y \in [a,b]} \angle(\gamma'(x),\gamma'(y)) .
    \end{equation*}
\end{definition}

\begin{definition}
    For a continuously differentiable map $f: \R^n \rightarrow \R^m$
    \begin{equation*}
    f(x) = \begin{pmatrix}
        f^{(1)}(x) \\
        \vdots \\
        f^{(m)}(x)
    \end{pmatrix},
    \end{equation*}
    and a point $x \in \R^n$ define the \textbf{total derivative} of $f$ at $x$ as
    \begin{equation*}
        D_x f = \begin{pmatrix}
            \frac{\partial f^{(1)}}{\partial x_1} & \dots & \frac{\partial f^{(1)}}{\partial x_n} \\
            \vdots & \ddots & \vdots \\
            \frac{\partial f^{(m)}}{\partial x_1} & \dots & \frac{\partial f^{(m)}}{\partial x_n}
        \end{pmatrix}.
    \end{equation*}
\end{definition}

\begin{definition}
    For a continuously differentiable map $f: \R^n \rightarrow \R^m$, define its \textbf{$C^1$} \textbf{seminorm} as
    \begin{equation*}
        \|f\|_{1} = \sup_{x,v \in \R^n, \|v\|= 1} \|D_x f \cdot v\|,
    \end{equation*}
    and for $0 < \eps \le 1$, define its $C^{1+\eps}$ \textbf{seminorm} as
    \begin{equation*}
        \|D f\|_{\eps} = \sup_{x, y \in \R^n} \frac{\|D_x f  - D_y f\|}{\|x - y\|^{\eps}}.
    \end{equation*}
    We say that $f \in C^{1 + \eps}$ if $\|D f\|_{\eps} < \infty$.
\end{definition}

\begin{remark}
    Note that the case of $\eps = 1$ corresponds to $f$ with a Lipschitz first derivative.
\end{remark}

The following Theorem is the main result of the paper:

\begin{thm} \label{thm:main}
    Let $\gamma: [a_0, b_0] \subset \R \rightarrow \R^d$ be the natural parameterization of a regular $C^1$ curve. Fix $0 < \eps \le 1$ and let $f_i:\R^d \rightarrow \R^d$ be $C^{1 + \eps}$ maps. Denote $F_i = f_i \circ f_{i-1} \circ ... \circ f_1$, where $1 \leq i \leq n$ and let $F_0$ be the identity map. Assume there exists $C$ such that 
    \begin{equation} \label{reg_multDim}
        \|f_i\|_1, \|f_i^{-1}\|_1, \|D f_i\|_{\eps} \leq C
    \end{equation}
    for all $1 \leq i \leq n$ and assume there exists $L$ and $\alpha$ such that $\sum_{i=0}^{n-1} L(F_{i} \circ \gamma)^\eps \le L$ and $\sum_{i=0}^{n-1} \alpha(F_{i} \circ \gamma) \le \alpha$. Then there exists a constant $K$ such that for any $x, y \in [a_0, b_0]$,
    \begin{equation} \label{derDist}
        K^{-1} \leq \frac{\|(F_n \circ \gamma)'(x)\|}{\|(F_n \circ \gamma)'(y)\|} \leq K,
    \end{equation}
    where $K$ depends only on $C, L,$ and $\alpha$ (and not $n$). 
    
    Moreover, consider two subintervals $[a_1, b_1],[a_2, b_2] \subset [a_0, b_0]$ and related arcs given by $\gamma([a_1, b_1])$ and $ \gamma([a_2, b_2])$. Then
    \begin{equation} \label{lengthDist}
        \frac{|b_1 - a_1|}{|b_2 - a_2|} K^{-2} \leq \frac{L((F_n \circ \gamma)([a_1, b_1]))}{L((F_n \circ \gamma)([a_2, b_2]))} \leq \frac{|b_1 - a_1|}{|b_2 - a_2|} K^2.
    \end{equation}
\end{thm}

\begin{proof}

    We start by proving inequality (\ref{derDist}). Denote points $x_i = (F_i \circ \gamma)(x),\ y_i = (F_i \circ \gamma)(y)$, and vectors $u_i = (F_i \circ \gamma)'(x),\ v_i = (F_i \circ \gamma)'(y)$ for $0 \leq i \leq n$. Notice that all $\| f_i^{-1}\|_1$ are finite, so we can say that $\|u_i\|, \|v_i\| \neq 0$ for all $1 \leq i \leq n$. Now we can manipulate the following equation:
    \begin{equation} \label{biglog}
    \begin{split}
        \left| \log \frac{\|u_n\|}{\|v_n\|} \right| &= \left| \log \frac{\|D_{x_{n-1}}f_n \cdot u_{n-1}\|}{\|D_{y_{n-1}}f_n \cdot v_{n-1}\|} \right| \leq \\
            &\leq \left| \log \frac{\|D_{x_{n-1}}f_n \cdot u_{n-1}\|}{\|D_{x_{n-1}}f_n \cdot v_{n-1}\|} \right| + \left| \log \frac{\|D_{x_{n-1}}f_n \cdot v_{n-1}\|}{\|D_{y_{n-1}}f_n \cdot v_{n-1}\|} \right|. \\
    \end{split}
    \end{equation}
    We continue by bounding both logarithms, and we will do so with the following lemmas:

    \begin{lm} \label{Lemma 1}
        For every $1 \leq i \leq n$,
        \begin{equation*}
            \left| \log \frac{\|D_{x_{i-1}}f_i \cdot u_{i-1}\|}{\|D_{x_{i-1}}f_i \cdot v_{i-1}\|} \right| \leq \left| \log \frac{\|u_{i-1}\|}{\|v_{i-1}\|} \right| + C^2 \cdot \alpha(F_{i-1} \circ \gamma)
        \end{equation*}
    \end{lm}
    \begin{lm} \label{Lemma 2}
        For every $1 \leq i \leq n$,
        \begin{equation*}
            \left| \log \frac{\|D_{x_{i-1}}f_i \cdot v_{i-1}\|}{\|D_{y_{i-1}}f_i \cdot v_{i-1}\|} \right| \leq C^2 \cdot L(F_{i-1} \circ \gamma)^\eps
        \end{equation*}
    \end{lm}
    \begin{proof}[Proof of Lemma \ref{Lemma 1}]
        Let $\tilde{u}_{i-1}$ and $\tilde{v}_{i-1}$ notate the normalized vectors of $u_{i-1}$ and $v_{i-1}$ respectively. Then
        \begin{equation*}
            \left| \log \frac{\|D_{x_{i-1}}f_i \cdot u_{i-1}\|}{\|D_{x_{i-1}}f_i \cdot v_{i-1}\|} \right|
            \leq \left| \log \frac{\|D_{x_{i-1}}f_i \cdot \tilde{u}_{i-1}\|}{\|D_{x_{i-1}}f_i \cdot \tilde{v}_{i-1}\|} \right| + \left| \log \frac{\|u_{i-1}\|}{\|v_{i-1}\|} \right|.
        \end{equation*}
        Without loss of generality, assume $\|D_{x_{i-1}}f_i \cdot \tilde{u}_{i-1}\| \geq \|D_{x_{i-1}}f_i \cdot \tilde{v}_{i-1}\|$.  If we only focus on the part of the left term that is inside the logarithm, we can rearrange it as
        \begin{equation} \label{angEst1}
        \begin{split}
            \frac{\|D_{x_{i-1}}f_i \cdot \tilde{u}_{i-1}\|}{\|D_{x_{i-1}}f_i \cdot \tilde{v}_{i-1}\|} &= \frac{\|D_{x_{i-1}}f_i \cdot \tilde{u}_{i-1} - D_{x_{i-1}}f_i \cdot \tilde{v}_{i-1} + D_{x_{i-1}}f_i \cdot \tilde{v}_{i-1}\|}{\|D_{x_{i-1}}f_i \cdot \tilde{v}_{i-1}\|} \le \\
            &\leq \frac{\|D_{x_{i-1}}f_i\| \cdot \|(\tilde{u}_{i-1} - \tilde{v}_{i-1})\|}{\|D_{x_{i-1}}f_i \cdot \tilde{v}_{i-1}\|} + 1. \\
        \end{split}
        \end{equation}
        Denote $\alpha_{i-1} = \angle(\tilde{u}_{i-1}, \tilde{v}_{i-1})$. We can use the definition of $\alpha_{i-1}$ and the fact that $\sin \left( \frac{\alpha_{i-1}}{2} \right) \leq \frac{\alpha_{i-1}}{2}$ to get
        \begin{equation} \label{angEst2}
            \|(\tilde{u}_{i-1} - \tilde{v}_{i-1})\| = 2\sin \left( \frac{\alpha_{i-1}}{2} \right) \leq \alpha_{i-1} \leq \alpha(F_{i-1} \circ \gamma).
        \end{equation}
        Note that $\det (D_{x_{i-1}}f_i) \neq 0$ since every $\| f_i^{-1} \|$ is finite, hence its inverse exists. Combining inequalities (\ref{angEst1}) and (\ref{angEst2}) gives us
        \begin{equation*}
            \frac{\|D_{x_{i-1}}f_i \cdot \tilde{u}_{i-1}\|}{\|D_{x_{i-1}}f_i \cdot \tilde{v}_{i-1}\|} \leq \|D_{x_{i-1}}f_i\| \cdot \|(D_{x_{i-1}}f_i) ^{-1}\| \cdot \alpha(F_{i-1} \circ \gamma) + 1 \leq C^2 \cdot \alpha(F_{i-1} \circ \gamma) + 1.
        \end{equation*}
        Since $\left| \log (1+x) \right| \leq x$ for all $x \geq 0$, and that $|\log x|$ is monotone increasing for $x \geq 1$, we can get our final bound:
        \begin{equation} \label{logBound}
            \left| \log \frac{\|D_{x_{i-1}}f_i \cdot \tilde{u}_{i-1}\|}{\|D_{x_{i-1}}f_i \cdot \tilde{v}_{i-1}\|}  \right| \leq \left| \log \left( C^2 \cdot \alpha(F_{i-1} \circ \gamma) + 1 \right) \right| \leq C^2 \cdot \alpha(F_{i-1} \circ \gamma).
        \end{equation}
    \end{proof}
    \begin{proof}[Proof of Lemma ~\ref{Lemma 2}]
    Once again, let $\tilde{v}_{i-1}$ notate the normalized vector of $v_{i-1}$. Without loss of generality, assume $\|D_{x_{i-1}}f_i \cdot \tilde{v}_{i-1}\| \geq \|D_{y_{i-1}}f_i \cdot \tilde{v}_{i-1}\|$. Then
    \begin{equation*}
    \begin{split}
         \frac{\| D_{x_{i-1}}f_i \cdot v_{i-1} \|}{\| D_{y_{i-1}}f_i \cdot v_{i-1} \|} &= \frac{\|D_{x_{i-1}}f_i \cdot \tilde{v}_{i-1} - D_{y_{i-1}}f_i \cdot \tilde{v}_{i-1} + D_{y_{i-1}}f_i \cdot \tilde{v}_{i-1}\|}{\|D_{y_{i-1}}f_i \cdot \tilde{v}_{i-1}\|} \le \\
         &\leq C \cdot \|D_{x_{i-1}}f_i - D_{y_{i-1}}f_i\| + 1,
    \end{split}
    \end{equation*}
    and we can bound the following similar to (\ref{logBound}):
    \begin{equation*}
        \left| \log \frac{\| D_{x_{i-1}}f_i \cdot v_{i-1} \|}{\| D_{y_{i-1}}f_i \cdot v_{i-1} \|} \right| \leq C \cdot \|D_{x_{i-1}}f_i - D_{y_{i-1}}f_i\|.
    \end{equation*}
    Note that
    \begin{equation*}
        \|x_{i-1} - y_{j-1}\| \leq L((F_{i-1} \circ \gamma)([x,y])) \leq L(F_{i-1} \circ \gamma),
    \end{equation*}
    then
    \begin{equation*}
        1 \leq \frac{L(F_{i-1} \circ \gamma)^\eps}{\|x_{i-1} - y_{j-1}\|^\eps}.
    \end{equation*}
    It follows that
    \begin{equation*}
        C \cdot \|D_{x_{i-1}}f_i - D_{y_{i-1}}f_i\| \leq C \frac{\|D_{x_{i-1}}f_i - D_{y_{i-1}}f_i\|}{\|x_{i-1} - y_{j-1}\|^\eps} \cdot L(F_{i-1} \circ \gamma)^\eps \leq C^2 \cdot L(F_{i-1} \circ \gamma)^\eps.
    \end{equation*}
    \end{proof}
    Combining Lemma \ref{Lemma 1} and Lemma \ref{Lemma 2} with equation (\ref{biglog}) shows
    \begin{equation*}
        \left| \log \frac{\|u_n\|}{\|v_n\|} \right| \leq \left| \log \frac{\|u_{n-1}\|}{\|v_{n-1}\|} \right| + C^2 \cdot \alpha(F_{n-1} \circ \gamma) + C^2 \cdot L(F_{n-1} \circ \gamma)^\eps.
    \end{equation*}
    We can apply this inequality recursively to get
    \begin{equation*}
        \left| \log \frac{\|u_n\|}{\|v_n\|} \right| \leq C^2 \sum_{i=0}^{n-1} \big[ \alpha(F_{i} \circ \gamma) + L(F_{i} \circ \gamma)^\eps \big] \leq C^2 (\alpha + L).
    \end{equation*}
    We finish the proof of (\ref{derDist}) by taking $K = e^{C^2 (\alpha + L)}$.

    Now let us move on to inequality (\ref{lengthDist}). Let $t \in [a_0, b_0] $ be arbitrary. Note that $\|(F_n \circ \gamma)'(t)\| \neq 0$ since $\det (D_{\gamma(t)} f_i) \neq 0 $ and $\gamma' (t) \neq 0$. Then by definition of length we have
    \begin{equation*}
        \frac{L((F_n \circ \gamma)([a_1, b_1]))}{\|(F_n \circ \gamma)'(t)\|} = \int_{a_1}^{b_1} \frac{\| (F_n \circ \gamma)' (x) \|}{\|(F_n \circ \gamma)'(t)\|} \dd x 
    \end{equation*}
    and inequality \eqref{derDist} gives us
    \begin{equation*}
        |b_1 - a_1|K^{-1} \leq \frac{L((F_n \circ \gamma)([a_1, b_1]))}{\|(F_n \circ \gamma)'(t)\|} \le |b_1 - a_1| K.
    \end{equation*}
    We can similarly obtain
    \begin{equation*}
        \frac{K^{-1}}{|b_2 - a_2|}
        \le \frac{\|(F_n \circ \gamma)'(t)\|}{L((F_n \circ \gamma)([a_2, b_2]))}
        \le \frac{K}{|b_2 - a_2|},
    \end{equation*}
    and multiplying these two inequalities finishes the proof.
\end{proof}

\appendix
\section{Application in spectral theory} \label{app:spectral}

In this subsection we describe the main motivation behind our paper. 

Spectral properties of discrete Schr\"odinger operators with dynamically defined potentials has been an object of intensive studies during the last several decades. Let us briefly describe a model we hope to apply our results to. For a positive number $\lambda > 0$ (usually referred to as coupling constant) and irrational angle $\alpha \in \T^1 = \R/\Z$ let us define the following bounded, self-adjoint operator on $l^2(\Z)$:

\[
    (H_{\alpha, \lambda, \omega} \psi) (n) = \psi(n - 1) + \psi(n + 1) + \lambda R_{\alpha, \omega} (n) \psi(n), \quad \psi \in l^2(\Z),
\]
where 

\[
    R_{\alpha, \omega} (n) = \chi_{[1 - \alpha, 1)} (n \alpha + \omega \mod 1).
\]
$H_{\alpha, \lambda, \omega,}$ is a discrete Schr\"odinger operator with the potential $R_{\alpha, \omega} (n)$. The sequence $\{R_{\alpha, \omega} (n)\}_{n \in \Z}$ is usually referred to as rotation sequence and is known to be Sturmian, i.e. a non-periodic sequence of the lowest possible complexity (for a formal definition see \cite{Fogg}). One of canonical examples of a rotation sequence is given by $R_{\left( \frac{\sqrt{5} - 1}{2}, 0 \right)}$. This sequence can also be obtained as a limit sequence for the Fibonacci substitution: $0 \to 01$, $1 \to 0$. We will denote the corresponding Schr\"odinger operator by $H_{\lambda}$ for brevity. Spectral properties of operator $H_{\lambda}$, which is usually referred to as Fibonacci Hamiltonian, were extensively studied by many authors (see \cite{DGY, S2} and references therein).

One of the tools that proved to be extremely useful in the study of the spectrum of Fibonacci Hamiltonian is Trace Map Formalism. Let us briefly describe the mechanism behind it. Consider a map
\begin{equation} \label{T_1}
    T_1: \R^3 \to \R^3, \quad T_1(x, y, z) = (2xy - z, x, y).
\end{equation}
Map $T_1$ preserves the following family of surfaces in $\R^3$:
\[
    S_{\lambda} = \bigg\{ (x, y, z) \; \bigg| \; x^2 + y^2 + z^2 - 2xyz = 1 + \frac{\lambda^2}{4} \bigg\}.
\]
Denote by $l_{\lambda}$ the line 
\[
    l_{\lambda} = \bigg \{ \left( \frac{E - \lambda}{2}, \frac{E}{2}, 1 \right) \bigg| E \in \R \bigg\}.
\]
It's easy to check that $l_{\lambda} \subset S_{\lambda}$. The following celebrated result by A. S\"{u}t\H{o} establishes a connection between spectral properties of $H_{\lambda}$ and dynamical properties of the map $T_1$.
\begin{thm}[A. S\"ut\H{o}, 1987, \cite{S1}] \label{thm:FibHam}
    $E \in \R$ lies in the spectrum of $H_{\lambda}$ if and only if the positive semitrajectory 
    \[
        \bigg \{ T_1^{n} \left( \frac{E - \lambda}{2}, \frac{E}{2}, 1 \right) \bigg| n \in \N \bigg \} \subset S_{\lambda}
    \]
    is bounded.
\end{thm}

This concise description of the spectrum is possible because of a nice property of the golden mean $\frac{\sqrt{5} - 1}{2}$. Namely, the continuous fraction expansion of that number consists of all ones:
\[
    \frac{\sqrt{5} - 1}{2} = \cfrac{1}{1 + \cfrac{1}{1 + \cfrac{1}{1 + \ldots \vphantom{\cfrac{1}{1}}}}}.
\]
A similar description is available for an arbitrary irrational 
\begin{equation} \label{contFrac}
    \alpha = \cfrac{1}{a_1 + \cfrac{1}{a_2 + \cfrac{1}{a_3 + \ldots \vphantom{\cfrac{1}{1}}}}}.
\end{equation}
In order to give it we have to define a sequence of maps $\{T_a\}_{a \in \N}$. We start with two auxiliary maps:
\[
    U:\R^3 \to \R^3, \quad U(x, y, z) = (2xz - y, x, z)
\]
and 
\[
    P:\R^3 \to \R^3, \quad U(x, y, z) = (x, z, y).
\]
Now we define $T_1 = UP$ (which also agrees with formula (\ref{T_1})) and 
\[
    T_{a + 1} = U T_{a} = U^{a} T_1 = U^{a + 1} P.
\]
The family of surfaces $S_{\lambda}$ is preserved by $U$ and $P$, hence also preserved by every $T_a$. For a fixed irrational $\alpha$ with a continued fraction expansion given by (\ref{contFrac}) and every $k \in \N$ define
\[
    \left( x_k(E), y_k(E), z_k(E) \right) = T_{a_k} \circ T_{a_{k - 1}} \circ \ldots \circ T_{a_1} \left( \frac{E - \lambda}{2}, \frac{E}{2}, 1 \right).
\]
The description of the spectrum of $H_{\alpha, \lambda, \omega}$ is given by the following
\begin{thm}[J. Bellissard, B. Iochum, E. Scoppola, D. Testard, 1989, \cite{BIST}] \label{thm:Sturm}
    $E \in \R$ lies in the spectrum of $H_{\alpha, \lambda, \omega}$ if and only if the corresponding trajectory $\{(x_k(E), y_k(E), z_k(E))\}_{k \in \N}$ is bounded.
\end{thm}

Using the description from Theorem \ref{thm:FibHam} D. Damanik and A. Gorodetski were able to estimate the thickness and Hausdorff dimension of the spectrum of Fibonacci Hamiltonian $H_{\lambda}$ for the coupling constant $\lambda$ close to zero. In their work they used a stationary higher-dimensional version of the Bounded Distortion Property (see \cite[Proposition 3.11]{DG}). In a later work \cite{M} M. Mei was able to extend that result for irrational $\alpha$ with eventually periodic continuous fraction expansion using Theorem \ref{thm:Sturm}. This case can be boiled down to application of one map $T = T_{a_k} \circ T_{a_{k - 1}} \circ \ldots \circ T_{a_1}$, where $(a_1, a_2, \ldots, a_k)$ stands for the period of the continuous fraction expansion of $\alpha$. For related results see also \cite{G}. We expect that our nonstationary version of the Bounded Distortion Property joined with Theorem \ref{thm:Sturm} can be used to apply these techniques to $\alpha$ with aperiodic continuous fraction expansion to study spectral properties of a wider class of operators $H_{\alpha, \lambda, \omega}$.

\section*{Acknowledgements}

We are grateful to A. Gorodetski for introducing us to the problem. The second author was supported in part by NSF grant DMS--2247966 (PI: A.\,Gorodetski).


\begin{thebibliography}{99}

\bibitem[BIST]{BIST} J. Bellissard, B. Iochum, E. Scoppola, D. Testard, Spectral properties of one-dimensional quasi-crystals. Comm. Math. Phys. {\bf 125} (1989), no.3, 527–543.


\bibitem[DG]{DG} D. Damanik, A. Gorodetski, Spectral and quantum dynamical properties of the weakly coupled Fibonacci Hamiltonian. Comm. Math. Phys., {\bf 305} (2011), no.1, 221–277. 

\bibitem[DGY]{DGY} D. Damanik, A. Gorodetski, W. Yessen, The Fibonacci Hamiltonian. Invent. Math. {\bf 206} (2016), no.3, 629–692.

\bibitem[De]{De} A. Denjoy, Sur les courbes d\'efinies par les \'equations diff\'erentielles \`a la surface du tore. J. Math. Pures Appl. (9) {\bf 11} (1932), 333–375.

\bibitem[F]{F} K. Falconer, Bounded distortion and dimension for nonconformal repellers. Math. Proc. Cambridge Philos. Soc. {\bf 115} (1994), no.2, 315–334.

\bibitem[Fogg]{Fogg} N. Fogg, Substitutions in dynamics, arithmetics and combinatorics. Lecture Notes in Math., {\bf 1794}, Springer-Verlag, Berlin, 2002. xviii+402 pp.

\bibitem[G]{G} A. Girand, Dynamical Green functions and discrete Schr\"odinger operators with potentials generated by primitive invertible substitution. Nonlinearity, {\bf 27} (2014), no.3, 527–543.

\bibitem[KH]{KH} A. Katok, B. Hasselblatt, Introduction to the modern theory of dynamical systems. Encyclopedia Math. Appl., 54 Cambridge University Press, Cambridge, 1995. xviii+802 pp.

\bibitem[L]{L} S. Luzzatto, Stochastic-like behaviour in nonuniformly expanding maps. Handbook of dynamical systems. Vol. 1B, 265–326. Elsevier B. V., Amsterdam, 2006.

\bibitem[M]{M} M. Mei, Spectra of discrete Schr\"odinger operators with primitive invertible substitution potentials. J. Math. Phys. {\bf 55} (2014), no.8, 082701, 22 pp.

\bibitem[N1]{N1} S. Newhouse, Diffeomorphisms with infinitely many sinks. Topology {\bf 13} (1974), 9–18.

\bibitem[N2]{N2} S. Newhouse, Distortion estimates for planar diffeomorphisms. Discrete Contin. Dyn. Syst. {\bf 22} (2008), no.1-2, 345–412.


\bibitem[O]{O} H. Ounesli, $C^1$-genericity of unbounded distortion for ergodic conservative expanding circle maps. arXiv:2308.01706 (2023)

\bibitem[PT]{PT} J. Palis, F. Takens, Hyperbolicity and sensitive chaotic dynamics at homoclinic bifurcations. Fractal dimensions and infinitely many attractors. Cambridge Stud. Adv. Math., {\bf 35} Cambridge University Press, Cambridge, 1993. x+234 pp.

\bibitem[PY]{PY} J. Palis, J-C. Yoccoz, Implicit formalism for affine-like maps and parabolic composition. Global analysis of dynamical systems, 67–87. Institute of Physics Publishing, Bristol, 2001.

\bibitem[S1]{S1} A. S\"ut\H{o}, The spectrum of a quasiperiodic Schrödinger operator. Comm. Math. Phys. {\bf 111} (1987), no.3, 409–415.

\bibitem[S2]{S2} A. S\"ut\H{o}, Singular continuous spectrum on a Cantor set of zero Lebesgue measure for the Fibonacci Hamiltonian. J. Statist. Phys., {\bf 56} (1989), no.3-4, 525–531.

\bibitem[T]{T} F. Takens, Limit capacity and Hausdorff dimension of dynamically defined Cantor sets. Dynamical systems, Valparaiso 1986, 196–212.


\bibitem[Y]{Y} L.-S. Young, Statistical properties of dynamical systems with some hyperbolicity. Ann. of Math., (2) {\bf 147}(1998), no.3, 585–650.



\end{thebibliography}
\end{document}